\documentclass[a4paper,12pt]{article}
    \usepackage[top=2.5cm,bottom=2.5cm,left=2.5cm,right=2.5cm]{geometry}
    \usepackage{cite, amsmath, amssymb}
    \usepackage[margin=1cm,%
                font=small,%
                format=hang,%
                labelsep=period,%
                labelfont=bf]{caption}
    \pagestyle{empty}
\usepackage[latin1]{inputenc}
\usepackage{rotating}
\usepackage{color}
\usepackage{graphicx}
\usepackage{epstopdf}
\usepackage{multirow} 
\usepackage{subfigure}
\usepackage{amsmath}
\usepackage{amsfonts}
\usepackage{latexsym}
\usepackage{authblk}
\usepackage[none]{hyphenat}
\begin{document}

\sloppy
\newcommand{\hide}[1]{}
\newcommand{\tbox}[1]{\mbox{\tiny #1}}
\newcommand{\half}{\mbox{\small $\frac{1}{2}$}}
\newcommand{\sinc}{\mbox{sinc}}
\newcommand{\const}{\mbox{const}}
\newcommand{\trc}{\mbox{trace}}
\newcommand{\intt}{\int\!\!\!\!\int }
\newcommand{\ointt}{\int\!\!\!\!\int\!\!\!\!\!\circ\ }
\newcommand{\eexp}{\mbox{e}^}
\newcommand{\bra}{\left\langle}
\newcommand{\ket}{\right\rangle}
\newcommand{\EPS} {\mbox{\LARGE $\epsilon$}}
\newcommand{\ar}{\mathsf r}
\newcommand{\im}{\mbox{Im}}
\newcommand{\re}{\mbox{Re}}
\newcommand{\bmsf}[1]{\bm{\mathsf{#1}}}
\newcommand{\mpg}[2][1.0\hsize]{\begin{minipage}[b]{#1}{#2}\end{minipage}}
\newcommand{\CC}{\mathbb{C}}
\newcommand{\NN}{\mathbb{N}}
\newcommand{\PP}{\mathbb{P}}
\newcommand{\RR}{\mathbb{R}}
\newcommand{\QQ}{\mathbb{Q}}
\newcommand{\ZZ}{\mathbb{Z}}
\renewcommand{\a}{\alpha}
\renewcommand{\b}{\beta}
\renewcommand{\d}{\delta}
\newcommand{\D}{\Delta}
\newcommand{\g}{\gamma}
\newcommand{\G}{\Gamma}
\renewcommand{\th}{\theta}
\renewcommand{\l}{\lambda}
\renewcommand{\L}{\Lambda}
\renewcommand{\O}{\Omega}
\newcommand{\s}{\sigma}

\newtheorem{theorem}{Theorem}
\newtheorem{acknowledgement}[theorem]{Acknowledgement}
\newtheorem{algorithm}[theorem]{Algorithm}
\newtheorem{axiom}[theorem]{Axiom}
\newtheorem{claim}[theorem]{Claim}
\newtheorem{conclusion}[theorem]{Conclusion}
\newtheorem{condition}[theorem]{Condition}
\newtheorem{conjecture}[theorem]{Conjecture}
\newtheorem{corollary}[theorem]{Corollary}
\newtheorem{criterion}[theorem]{Criterion}
\newtheorem{definition}[theorem]{Definition}
\newtheorem{example}[theorem]{Example}
\newtheorem{exercise}[theorem]{Exercise}
\newtheorem{lemma}[theorem]{Lemma}
\newtheorem{notation}[theorem]{Notation}
\newtheorem{problem}[theorem]{Problem}
\newtheorem{proposition}[theorem]{Proposition}
\newtheorem{remark}[theorem]{Remark}
\newtheorem{solution}[theorem]{Solution}
\newtheorem{summary}[theorem]{Summary}
\newenvironment{proof}[1][Proof]{\noindent\textbf{#1.} }{\ \rule{0.5em}{0.5em}}

\title{\vspace*{3.5cm}\textbf{Analytical and computational properties of the variable symmetric division deg index}}

\author[1]{\bf{R. Aguilar-S\'anchez}}
\author[2]{\bf{J. A. M\'endez-Berm\'udez}}
\author[3]{\bf{Jos\'e M. Rodr\'{\i}guez}}
\author[4]{\bf{Jos\'e M. Sigarreta\footnote{Corresponding author}}}

\affil[1]{\small{\it Facultad de Ciencias Qu\'imicas, Benem\'erita Universidad
Aut\'onoma de Puebla, Puebla 72570, Mexico}}
\affil[2]{\it Instituto de F\'{\i}sica, Benem\'erita Universidad Aut\'onoma de Puebla,
Apartado Postal J-48, Puebla 72570, Mexico}
\affil[3]{\it Departamento de Matem\'aticas, Universidad Carlos III de Madrid, 
Avenida de la Universidad 30, 28911 Legan\'es, Madrid, Spain}
\affil[4]{\it Facultad de Matem\'aticas, Universidad Aut\'onoma de Guerrero, 
Carlos E. Adame No.54 Col. Garita, Acapulco Gro. 39650, Mexico}
\affil[ ]{\ttfamily {\textbf {ras747698@gmail.com, jmendezb@ifuap.buap.mx, jomaro@math.uc3m.es, jsmathguerrero@gmail.com}}}
\date{}

\maketitle
\thispagestyle{empty}

\centerline{(Received xxx)}
\begin{abstract}
The aim of this work is to obtain new inequalities for the variable symmetric division deg index 
$SDD_\alpha(G) = \sum_{uv \in E(G)} (d_u^\alpha/d_v^\alpha+d_v^\alpha/d_u^\alpha)$, 
and to characterize graphs extremal with respect to them.
Here, $uv$ denotes the edge of the graph $G$ connecting the vertices $u$ and $v$,
$d_u$ is the degree of the vertex $u$, and $\a \in \RR$.
Some of these inequalities generalize and improve previous results for the symmetric division 
deg index.
In addition, we computationally apply the $SDD_\alpha(G)$ index on 
random graphs and show that the ratio $\bra SDD_a(G) \ket/n$ ($n$ being the order of the graph) 
depends only on the average degree $\bra d \ket$.
\end{abstract}

\baselineskip=0.30in

\section{Preliminaries}

A large number of graph invariants of the form $X(G)=\sum_{uv \in E(G)} F(d_u,d_v)$, where $uv$ 
denotes the edge of the graph $G$ connecting the vertices $u$ and $v$ and $d_u$ is the degree of 
the vertex $u$, are studied in mathematical chemistry.
The single number $X(G)$, representing a chemical structure in graph-theoretical terms via the molecular 
graph, is called a topological descriptor and if in addition it correlates with a molecular property it is 
called a topological index.
Remarkably, topological indices capture physical properties of a molecule in a single number.

Hundreds of topological indices have been proposed and studied over more than 40 years.
Among them, probably the most popular descriptors are the Randi\'c connectivity index and the 
Zagreb indices. The first and second Zagreb indices,
denoted by $M_1$ and $M_2$, respectively, were introduced by Gutman and Trinajsti\'c in $1972$ (see \cite{GT})
as
$$
M_1(G) = \sum_{u\in V(G)} d_u^2,
\qquad
M_2(G) = \sum_{uv\in E(G)} d_u d_v .
\qquad
$$
For details of the applications and mathematical theory of  Zagreb indices see \cite{Gutman,GD,GR},
and the references therein.

The concept of variable molecular descriptors was proposed as a new way of characterizing heteroatoms 
in molecules (see \cite{R2,R3}), but also to assess structural differences (e.g., the relative role of carbon 
atoms of acyclic and cyclic parts in alkylcycloalkanes \cite{RPL}).
The idea behind the variable molecular descriptors is that the variables are determined during the
regression so that the standard error of estimate for a particular studied property is as small as possible 
(see, e.g., \cite{MN}).

In this line of ideas, the variable versions of the first and second Zagreb indices were defined as 
\cite{LZheng,LZhao,MN} 
$$
M_1^{\a}(G) = \sum_{u\in V(G)} d_u^{\a},
\qquad
M_2^{\a}(G) = \sum_{uv\in E(G)} (d_u d_v)^\a ,
$$
with $\a \in \RR$.
Evidently, $M_1^{2}$ and $M_2^{1}$ are the first and second Zagreb indices, respectively. In addition,
\emph{the first and second variable Zagreb indices} include several known indices. As examples
we note that $M_1^{-1}$ is the inverse index $ID$, $M_1^{3}$ is the forgotten index $F$, 
$M_2^{-1/2}$ is the Randi\'c index, and $M_2^{-1}$ is the modified Zagreb index.

In 2011, Vuki\v{c}evi\'c proposed the \emph{variable symmetric division deg index} \cite{Vuki5}
\begin{equation}
SDD_\alpha(G)
= \sum_{uv\in E(G)} \left(\frac{d_u^\alpha}{d_v^\alpha} + \frac{d_v^\alpha}{d_u^\alpha} \right) .
\label{SDD}
\end{equation}
Note that $SDD_{-\alpha}(G)=SDD_\alpha(G)$ and so, it suffices to consider positive values of $\a$.
The symmetric division deg index is the best predictor of total surface area for polychlorobiphenyls \cite{VG}.

\bigskip

In this paper we perform studies of the variable symmetric division deg index from analytical and 
computational viewpoints.

\section{Analytical study of the variable symmetric division deg index}
\label{analytics}

Let us start by proving a monotonicity property of these indices.

\begin{theorem} \label{t:1}
Let $G$ be a graph and $0 < \a < \b$.
Then
$$
SDD_{\a}(G)
\le SDD_{\b}(G) ,
$$
and the equality in the bound is attained if and only if each connected component of $G$ is a regular graph.
\end{theorem}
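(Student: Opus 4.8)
The plan is to prove the inequality edge by edge: since $SDD_{\a}(G)$ is a sum over $E(G)$, it suffices to show that for each fixed edge $uv$ the quantity
$$
\phi_{uv}(\a) = \frac{d_u^{\a}}{d_v^{\a}} + \frac{d_v^{\a}}{d_u^{\a}}
$$
is non-decreasing in $\a$ on $(0,\infty)$, and to record precisely when it is strictly increasing. Writing $s = d_u/d_v > 0$, this reduces to studying the single-variable function $\phi(\a) = s^{\a} + s^{-\a}$, and the whole theorem will follow by summing the resulting per-edge inequalities.

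First I would differentiate with respect to $\a$:
$$
\phi'(\a) = (\ln s)\,s^{\a} - (\ln s)\,s^{-\a} = (\ln s)\left(s^{\a} - s^{-\a}\right).
$$
A short sign analysis shows $\phi'(\a) \ge 0$ for all $\a > 0$: if $s > 1$ both factors are positive, if $s < 1$ both are negative, and if $s = 1$ the derivative vanishes identically. Hence $\phi_{uv}$ is non-decreasing, and it is \emph{strictly} increasing exactly when $s \neq 1$, i.e. when $d_u \neq d_v$. (Equivalently, one can write $\phi(\a) = 2\cosh(\a \ln s)$ and invoke the monotonicity of $\cosh$ on $[0,\infty)$.) Summing $\phi_{uv}(\a) \le \phi_{uv}(\b)$ over all $uv \in E(G)$ then yields $SDD_{\a}(G) \le SDD_{\b}(G)$.

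For the equality case, since every summand satisfies $\phi_{uv}(\a) \le \phi_{uv}(\b)$, equality in the total forces equality in each term; by the strictness just established this means $d_u = d_v$ for every edge $uv \in E(G)$. Conversely, if adjacent vertices always share the same degree, each per-edge term equals $2$ independently of $\a$, so both indices coincide. It remains to translate the condition ``$d_u = d_v$ on every edge'' into the stated characterization: within a connected component any two vertices are joined by a path, and equality of degrees propagates along it, forcing the component to be regular; conversely a regular component trivially satisfies the edge condition. The argument is elementary throughout, so there is no serious obstacle; the only points requiring care are the sign bookkeeping in $\phi'$ and the clean passage from edgewise strictness to the global equality characterization.
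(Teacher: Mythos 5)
Your proof is correct and follows essentially the same route as the paper: both reduce the claim to the monotonicity in $\a$ of the per-edge quantity $s^{\a}+s^{-\a}$ (with $s=d_u/d_v$) and then sum over edges, handling equality by noting strictness unless $d_u=d_v$ on every edge. The only cosmetic difference is that you verify the one-variable monotonicity by differentiating in $\a$, while the paper fixes $\a<\b$ and argues algebraically in the variable $x=d_u/d_v$ using the symmetry $u(1/x)=u(x)$.
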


\begin{proof}
Let us consider $x \ge 1$.
Thus, $x^{\a} \ge x^{-\b}$ and
$$
\begin{aligned}
x^{\b-\a} - 1 \ge 0,
\qquad
& x^{\a} \big(x^{\b-\a} - 1\big)
\ge x^{-\b} \big(x^{\b-\a} - 1\big),
\\
x^{\b} - x^{\a}
\ge x^{-\a} - x^{-\b} ,
\qquad
& x^{\b} + x^{-\b}
\ge x^{\a} + x^{-\a},
\end{aligned}
$$
for every $x \ge 1$.
Since $u(x)=x^{\a} + x^{-\a}$ satisfies $u(1/x)=u(x)$ for every $x>0$, we have
$x^{\b} + x^{-\b} \ge x^{\a} + x^{-\a}$ for every $x>0$.
Note that the equality is attained if and only if $x=1$.

Thus, we have
$$
\begin{aligned}
SDD_{\b}(G)
& = \sum_{uv \in E(G)} \!\! \left( \,\frac{d_u^{\b}}{d_v^{\b}} + \frac{d_v^{\b}}{d_u^{\b}} \right)
\ge \sum_{uv \in E(G)} \!\! \left( \,\frac{d_u^{\a}}{d_v^{\a}} + \frac{d_v^{\a}}{d_u^{\a}} \right)
= SDD_{\a}(G).
\end{aligned}
$$

The previous argument gives that the equality in the bound is attained if and only if $d_u/d_v = 1$ for every $uv \in E(G)$,
i.e., each connected component of $G$ is a regular graph.
\end{proof}

The next result relates the $SDD_a$ and the variable Zagreb indices.

\begin{theorem} \label{t:m2m1}
If $G$ is a graph with minimum degree $\d$ and maximum degree $\D$, and $\a >0$, then
$$
\begin{aligned}
2\d^{2\a} M_2^{-\a}(G)
& \le SDD_{\a}(G)
\le 2\D^{2\a} M_2^{-\a}(G),
\\
\D^{-2\a} M_1^{2\a+1}(G)
& \le SDD_{\a}(G)
\le \d^{-2\a} M_1^{2\a+1}(G),
\end{aligned}
$$
and the equality in each bound is attained if and only if $G$ is regular.
\end{theorem}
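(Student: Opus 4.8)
The plan is to reduce both pairs of inequalities to a single rewriting of the summand. For each edge $uv \in E(G)$ I would put the two fractions over a common denominator,
$$
\frac{d_u^{\a}}{d_v^{\a}} + \frac{d_v^{\a}}{d_u^{\a}} = \frac{d_u^{2\a} + d_v^{2\a}}{(d_u d_v)^{\a}},
$$
so that $SDD_{\a}(G) = \sum_{uv \in E(G)} (d_u^{2\a} + d_v^{2\a})(d_u d_v)^{-\a}$. Both pairs of bounds then follow by controlling the numerator and the denominator of this summand separately, using only the degree constraint $\d \le d_w \le \D$ together with the hypothesis $\a > 0$.

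For the first pair I would bound the numerator. Since $\a > 0$ the map $t \mapsto t^{2\a}$ is increasing on $(0,\infty)$, so $\d^{2\a} \le d_w^{2\a} \le \D^{2\a}$ for every vertex $w$, and hence $2\d^{2\a} \le d_u^{2\a} + d_v^{2\a} \le 2\D^{2\a}$ on each edge. Pulling these constants out of the sum and recognising $\sum_{uv \in E(G)} (d_u d_v)^{-\a} = M_2^{-\a}(G)$ yields the first displayed chain of inequalities at once.

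For the second pair the decisive step is a handshake-type identity. Grouping the vertex sum $M_1^{2\a+1}(G) = \sum_{u \in V(G)} d_u^{2\a}\, d_u$ according to the edges incident to each vertex gives
$$
M_1^{2\a+1}(G) = \sum_{uv \in E(G)} \big(d_u^{2\a} + d_v^{2\a}\big),
$$
since each edge $uv$ contributes $d_u^{2\a}$ at the endpoint $u$ and $d_v^{2\a}$ at the endpoint $v$. Now I would bound the denominator instead: from $\d^2 \le d_u d_v \le \D^2$ and $\a > 0$ I obtain $\D^{-2\a} \le (d_u d_v)^{-\a} \le \d^{-2\a}$; multiplying by the nonnegative numerator and summing over the edges produces the second displayed chain.

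For the equality statement I would trace back when each pointwise estimate is tight. On a given edge $uv$ the lower bounds become equalities exactly when $d_u = d_v = \d$, and the upper bounds exactly when $d_u = d_v = \D$. Demanding this on every edge forces all non-isolated vertices to share a single degree, so that $\d = \D$ and $G$ is regular; conversely, regularity makes $d_u = d_v$ on every edge and turns all four estimates into equalities. I expect no real obstacle here: once the summand is rewritten the inequalities are immediate, and the only point needing care is confirming that tightness on every edge propagates to global regularity of $G$.
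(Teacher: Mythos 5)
Your proposal is correct and matches the paper's own proof essentially step for step: the same rewriting of the summand as $(d_u^{2\a}+d_v^{2\a})/(d_u d_v)^{\a}$, the same bounding of numerator and denominator separately, the same handshake identity $\sum_{uv\in E(G)}(d_u^{2\a}+d_v^{2\a})=M_1^{2\a+1}(G)$, and the same edgewise analysis of the equality cases.
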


\begin{proof}
First of all recall that for any function $f$ we have
$$
\sum_{uv \in E(G)} \big( f(d_u)+ f(d_v) \big)
= \sum_{u \in V(G)} d_u f(d_u) .
$$
In particular,
$$
\sum_{uv \in E(G)} \big( d_u^{2\a}+d_v^{2\a} \big)
= \sum_{u \in V(G)} d_u^{2\a+1}
= M_1^{a+1}(G).
$$
Since
$$
SDD_{\a}(G)
= \sum_{uv \in E(G)} \!\! \left( \,\frac{d_u^{\a}}{d_v^{\a}} + \frac{d_v^{\a}}{d_u^{\a}} \right)
= \sum_{uv \in E(G)} \frac{d_u^{2\a}+d_v^{2\a}}{(d_u d_v)^{\a}} \,.
$$
and $\a > 0$, we obtain
$$
SDD_{\a}(G)
= \sum_{uv \in E(G)} \frac{d_u^{2\a}+d_v^{2\a}}{(d_u d_v)^{\a}}
\le 2\D^{2\a} \!\! \!\! \sum_{uv \in E(G)} (d_u d_v)^{-\a}
= 2\D^{2\a} M_2^{-\a}(G) ,
$$
and
$$
SDD_{\a}(G)
= \sum_{uv \in E(G)} \frac{d_u^{2\a}+d_v^{2\a}}{(d_u d_v)^{\a}}
\ge 2\d^{2\a} \!\! \!\! \sum_{uv \in E(G)} (d_u d_v)^{-\a}
= 2\d^{2\a} M_2^{-\a}(G) .
$$

We also have
$$
\begin{aligned}
SDD_{\a}(G)
& = \sum_{uv \in E(G)} \frac{d_u^{2\a}+d_v^{2\a}}{(d_u d_v)^{\a}}
\le \d^{-2\a} \!\! \!\! \sum_{uv \in E(G)} \big(d_u^{2\a}+d_v^{2\a}\big)
\\
& = \d^{-2\a} \!\! \!\! \sum_{u \in V(G)} d_u^{2\a+1}
= \d^{-2\a} M_1^{2\a+1}(G) ,
\end{aligned}
$$
and
$$
\begin{aligned}
SDD_{\a}(G)
& = \sum_{uv \in E(G)} \frac{d_u^{2\a}+d_v^{2\a}}{(d_u d_v)^{\a}}
\ge \D^{-2\a} \!\! \!\! \sum_{uv \in E(G)} \big(d_u^{2\a}+d_v^{2\a}\big)
\\
& = \D^{-2\a} \!\! \!\! \sum_{u \in V(G)} d_u^{2\a+1}
= \D^{-2\a} M_1^{2\a+1}(G) .
\end{aligned}
$$

\smallskip

If $G$ is a regular graph, then each lower bound and its corresponding upper bound are the same, and they are equal to $SDD_{\a}(G)$.

Assume now that the equality in either the first or second bound is attained.
The previous argument gives that we have either $d_u^{2\a}+d_v^{2\a} = 2\D^{2\a}$ for every $uv \in E(G)$
or $d_u^{2\a}+d_v^{2\a} = 2\d^{2\a}$ for every $uv \in E(G)$.
Thus, $d_u=d_v=\D$ for every $uv\in E(G)$, or $d_u=d_v=\d$ for every $uv\in E(G)$.
Hence, $G$ is regular.

Finally, assume that the equality in either the third or fourth bound is attained.
The previous argument gives that we have either $(d_u d_v)^{\a} = \d^{2\a}$ for every $uv \in E(G)$
or $(d_u d_v)^{\a} = \D^{2\a}$ for every $uv \in E(G)$.
Thus, $d_u=d_v=\d$ for every $uv\in E(G)$, or $d_u=d_v=\D$ for every $uv\in E(G)$.
Therefore, $G$ is regular.
\end{proof}

We will need the following technical result.

\begin{lemma} \label{l:1}
Let $0<a<A$. Then
$$
a
\le \frac{x^{2}+y^{2}}{x + y}
\le A
$$
for every $a \le x,y \le A$.
The lower bound is attained if and only if $x=y=a$.
The upper bound is attained if and only if $x=y=A$.
\end{lemma}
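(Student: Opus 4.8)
The plan is to clear the (positive) denominator and reduce each of the two inequalities to a manifestly signed sum of two terms. Since $x,y \ge a > 0$, the quantity $x+y$ is strictly positive, so multiplying through by $x+y$ preserves the direction of both inequalities; this lets me work entirely with polynomials and avoid any analysis of the rational function itself.

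For the lower bound, I would observe that $a \le \frac{x^2+y^2}{x+y}$ is equivalent to $a(x+y) \le x^2+y^2$, that is, to $x(x-a)+y(y-a) \ge 0$. Since $x \ge a$ and $y \ge a$ with $x,y>0$, each summand $x(x-a)$ and $y(y-a)$ is nonnegative, so their sum is nonnegative and the bound follows at once. For the upper bound I would argue symmetrically: $\frac{x^2+y^2}{x+y} \le A$ is equivalent to $x^2+y^2 \le A(x+y)$, i.e. $x(x-A)+y(y-A) \le 0$, where now $x \le A$ and $y \le A$ force each summand to be nonpositive.

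For the equality statements I would track exactly when each summand vanishes. Equality in the lower bound forces $x(x-a)=0$ and $y(y-a)=0$ simultaneously, and equality in the upper bound forces $x(x-A)=0$ and $y(y-A)=0$; in each case the product is a sum of two terms of a single sign, so the total vanishes only when both terms do. There is no genuine obstacle in this argument—the only point deserving care is that the hypothesis $x,y>0$ is precisely what discards the spurious factor $x=0$ (respectively $y=0$) in these factorizations, pinning down the equality cases as $x=y=a$ and $x=y=A$ uniquely.
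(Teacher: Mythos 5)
Your proof is correct and follows essentially the same route as the paper: the paper's one-line argument $ax+ay\le x^{2}+y^{2}\le Ax+Ay$ is exactly your term-by-term comparison $x(x-a)+y(y-a)\ge 0$ and $x(x-A)+y(y-A)\le 0$ after clearing the positive denominator. Your treatment of the equality cases is also the intended one, just spelled out more explicitly than in the paper.
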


\begin{proof}
If $a \le x,y \le A$, then $ax+ay\le x^{2}+y^{2} \le Ax+Ay$, and the statement holds.
\end{proof}

A family of degree--based topological indices, named \emph{Adriatic indices}, was put
forward in \cite{VG,V2}. Twenty of them were selected as significant predictors.
One of them, the \emph{inverse sum indeg} index, $ISI$, was singled out
in \cite{VG,V2} as a significant predictor of total surface area of octane isomers.
This index is defined as
$$
ISI(G) = \sum_{uv\in E(G)} \frac{d_u\,d_v}{d_u + d_v}
= \sum_{uv\in E(G)} \frac{1}{\frac{1}{d_u} + \frac{1}{d_v}}\,.
$$
Next, we relate $SDD_{\a}(G)$ with the \emph{variable inverse sum deg index} defined, for each $a \in \RR$, as
$$
ISD_a(G)
= \sum_{uv \in E(G)} \frac{1}{d_u^a + d_v^a} \,.
$$
Note that $ISD_{-1}$ is the inverse sum indeg index $ISI$.

\begin{theorem} \label{t:isd}
If $G$ is a graph with $m$ edges and minimum degree $\d$, and $\a>0$, then
$$
SDD_{\a}(G)
\ge \frac{\d^{\a} m^2}{ISD_{-\a}(G)} \,,
$$
and the equality in the bound is attained if and only if $G$ is regular.
\end{theorem}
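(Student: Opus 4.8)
The plan is to combine Lemma~\ref{l:1} with the Cauchy--Schwarz inequality applied edge by edge. First I would rewrite the two indices so as to expose a common structure. Using $d_u^{-\a}+d_v^{-\a} = (d_u^\a+d_v^\a)/(d_u d_v)^\a$, one gets
$$
ISD_{-\a}(G) = \sum_{uv\in E(G)} \frac{(d_u d_v)^\a}{d_u^\a+d_v^\a},
$$
while, as already noted in the proof of Theorem~\ref{t:m2m1},
$$
SDD_{\a}(G) = \sum_{uv\in E(G)} \frac{d_u^{2\a}+d_v^{2\a}}{(d_u d_v)^\a}.
$$
For each edge $uv$ write $a_{uv}$ for the summand of $SDD_\a$ and $b_{uv}$ for the summand of $ISD_{-\a}$. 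The decisive observation is that the factors $(d_u d_v)^\a$ cancel in the product, so that
$$
a_{uv}\,b_{uv} = \frac{d_u^{2\a}+d_v^{2\a}}{d_u^\a+d_v^\a}.
$$

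Next I would apply Lemma~\ref{l:1} with $x=d_u^\a$ and $y=d_v^\a$. Since $\a>0$ and $\d\le d_u,d_v$, we have $\d^\a \le d_u^\a, d_v^\a$, so the lower bound of the lemma gives $a_{uv} b_{uv} \ge \d^\a$, hence $\sqrt{a_{uv} b_{uv}} \ge \d^{\a/2}$ for every edge. Summing over the $m$ edges yields $\sum_{uv} \sqrt{a_{uv} b_{uv}} \ge m\,\d^{\a/2}$.

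Then the Cauchy--Schwarz inequality, in the form $\big(\sum_{uv}\sqrt{a_{uv} b_{uv}}\big)^2 \le \big(\sum_{uv} a_{uv}\big)\big(\sum_{uv} b_{uv}\big)$, produces
$$
SDD_\a(G)\,ISD_{-\a}(G) = \Big(\sum_{uv} a_{uv}\Big)\Big(\sum_{uv} b_{uv}\Big) \ge \Big(\sum_{uv}\sqrt{a_{uv} b_{uv}}\Big)^2 \ge m^2 \d^\a,
$$
and dividing by $ISD_{-\a}(G)>0$ gives the claimed bound.

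For the equality case I would trace back through the two inequalities. Equality in the Lemma~\ref{l:1} step forces $d_u^\a=d_v^\a=\d^\a$, i.e.\ $d_u=d_v=\d$, on every edge, which already makes $G$ regular; conversely, on a $\d$-regular graph all $a_{uv}$ are equal and all $b_{uv}$ are equal, so Cauchy--Schwarz is an equality too and the bound is sharp. The only point requiring care is the product cancellation that turns $a_{uv} b_{uv}$ into exactly the quotient controlled by Lemma~\ref{l:1}; once that is in place, the rest is routine, and the sole subtlety is getting the direction of Cauchy--Schwarz right, namely lower-bounding a product of sums by the square of a sum of geometric means.
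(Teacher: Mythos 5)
Your proof is correct and follows essentially the same route as the paper: Cauchy--Schwarz over the edge set combined with Lemma~\ref{l:1} applied to $x=d_u^{\a}$, $y=d_v^{\a}$. The only cosmetic difference is that the paper pairs each summand of $SDD_{\a}$ with its reciprocal so that the squared sum is exactly $m^2$ and then invokes the lemma on the second factor, whereas you pair the summands of the two indices directly and invoke the lemma to lower-bound the sum of geometric means; the ingredients and the equality analysis are the same.
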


\begin{proof}
Cauchy-Schwarz inequality gives
$$
\begin{aligned}
m^2
& = \Big( \sum_{uv \in E(G)} 1 \Big)^2
= \Big( \sum_{uv \in E(G)} \Big( \frac{d_u^{2\a}+d_v^{2\a}}{d_u^{\a} d_v^{\a}}\Big)^{1/2}
\Big( \frac{d_u^{\a} d_v^{\a}}{d_u^{2\a}+d_v^{2\a}}\Big)^{1/2} \, \Big)^2
\\
& \le \sum_{uv \in E(G)} \frac{d_u^{2\a}+d_v^{2\a}}{d_u^{\a} d_v^{\a}}
\sum_{uv \in E(G)} \frac{d_u^{\a} d_v^{\a}}{d_u^{2\a}+d_v^{2\a}} \,.
\end{aligned}
$$
Since Lemma \ref{l:1} gives
$$
\d^{\a}
\le \frac{x^{2\a}+y^{2\a}}{x^{\a} + y^{\a}}
\le \D^{\a},
\qquad
\frac1{x^{2\a}+y^{2\a}}
\le \frac{\d^{-\a}}{x^{\a} + y^{\a}}
\,,
$$
for every $\d \le x,y \le \D$,
we have
$$
\begin{aligned}
m^2
& \le \sum_{uv \in E(G)} \frac{d_u^{2\a}+d_v^{2\a}}{d_u^{\a} d_v^{\a}}
\sum_{uv \in E(G)} \frac{d_u^{\a} d_v^{\a}}{d_u^{2\a}+d_v^{2\a}}
\\
& \le \d^{-\a} SDD_{\a}(G)
\sum_{uv \in E(G)} \frac{d_u^{\a} d_v^{\a}}{d_u^{\a}+d_v^{\a}}
\\
& = \d^{-\a} SDD_{\a}(G)
\sum_{uv \in E(G)} \frac{1}{d_u^{-\a}+d_v^{-\a}}
\\
& = \d^{-\a} SDD_{\a}(G)\, ISD_{-\a}(G).
\end{aligned}
$$

If $G$ is a regular graph, then
$SDD_{\a}(G)=2m$,
$ISD_{-\a}(G)= m \d^{\a}/2$
and the equality in the bound is attained.

Assume now that the equality in the bound is attained.
Thus, by the previous argument and Lemma \ref{l:1}
we have $d_u=d_v=\d$ for every $uv\in E(G)$.
Hence, $G$ is regular.
\end{proof}

\medskip

The \emph{modified Narumi-Katayama index}
$$
NK^*(G) = \prod_{u\in V (G)} d_u^{d_u} = \prod_{uv\in E (G)} d_u d_v
$$
is introduced in \cite{GSG}, inspired in the Narumi-Katayama index defined in \cite{NK}.
Next, we prove an inequality relating the modified Narumi-Katayama index with $SDD_{\a}(G)$.

\begin{theorem} \label{t:nk}
Let $G$ be a graph with $m$ edges and minimum degree $\d$, and $\a > 0$.
Then
$$
SDD_\a(G) \ge 2\d^{2\a} m NK^*(G)^{-\a/m},
$$
and the equality in the bound is attained if and only if $G$ is a regular graph.
\end{theorem}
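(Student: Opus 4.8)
The plan is to combine a pointwise lower bound coming from the minimum degree with the arithmetic--geometric mean inequality, which is precisely what the product structure of $NK^*(G)$ invites. First I would rewrite the index in the product-friendly form already used in the previous proofs,
$$
SDD_{\a}(G) = \sum_{uv \in E(G)} \frac{d_u^{2\a}+d_v^{2\a}}{(d_u d_v)^{\a}} ,
$$
and observe that, since every degree is at least $\d$, we have $d_u^{2\a}+d_v^{2\a} \ge 2\d^{2\a}$ for each edge $uv$. This yields the intermediate bound
$$
SDD_{\a}(G) \ge 2\d^{2\a} \sum_{uv \in E(G)} (d_u d_v)^{-\a} .
$$

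Next I would apply the AM--GM inequality to the $m$ summands $(d_u d_v)^{-\a}$, obtaining
$$
\frac{1}{m} \sum_{uv \in E(G)} (d_u d_v)^{-\a} \ge \Big( \prod_{uv \in E(G)} (d_u d_v)^{-\a} \Big)^{1/m} = \Big( \prod_{uv \in E(G)} d_u d_v \Big)^{-\a/m} = NK^*(G)^{-\a/m} ,
$$
where the last equality is just the definition $NK^*(G) = \prod_{uv \in E(G)} d_u d_v$. Substituting this into the intermediate bound immediately gives $SDD_{\a}(G) \ge 2\d^{2\a} m\, NK^*(G)^{-\a/m}$, the claimed inequality.

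Finally, for the equality discussion I would track when each of the two steps is tight. The pointwise step $d_u^{2\a}+d_v^{2\a} \ge 2\d^{2\a}$ is an equality precisely when $d_u = d_v = \d$ on every edge, while the AM--GM step is an equality precisely when all the products $d_u d_v$ coincide. A regular graph makes both conditions hold at once, so equality is attained there; conversely, the first condition alone already forces every endpoint of every edge to have degree $\d$, hence (there being no isolated vertices since $\d\ge 1$) $G$ is $\d$-regular. I do not expect a genuine obstacle in this argument, since both inequalities are elementary; the only point deserving a little care is verifying that the equality cases of the two chained inequalities are mutually compatible and both collapse to regularity, rather than to some weaker joint degree condition.
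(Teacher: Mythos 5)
Your proof is correct and follows essentially the same route as the paper: the pointwise bound $d_u^{2\a}+d_v^{2\a}\ge 2\d^{2\a}$ followed by the AM--GM inequality applied to the terms $(d_u d_v)^{-\a}$, with the equality case extracted from the first step forcing $d_u=d_v=\d$ on every edge.
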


\begin{proof}
Using the fact that the geometric mean is at most the arithmetic mean, we obtain
$$
\begin{aligned}
\frac{1}{m}\,SDD_\a(G)
& = \frac{1}{m} \sum_{uv \in E(G)} \!\! \left( \,\frac{d_u^{\a}}{d_v^{\a}} + \frac{d_v^{\a}}{d_u^{\a}} \right)
= \frac{1}{m} \sum_{uv \in E(G)} \frac{d_u^{2\a}+d_v^{2\a}}{(d_u d_v)^{\a}}
\\
& \ge 2\d^{2\a} \frac{1}{m} \sum_{uv\in E(G)} \frac1{(d_u d_v)^\a}
\ge 2\d^{2\a} \left(\, \prod_{uv\in E(G)} \frac1{(d_u d_v)^\a} \,\right)^{1/m}
\\
& = 2\d^{2\a} NK^*(G)^{-\a/m}.
\end{aligned}
$$

If $G$ is a regular graph, then
$$
2\d^{2\a} m NK^*(G)^{-\a/m}
= 2\d^{2\a} m \big( \d^{2 m} \big)^{-\a/m}
= 2m
= SDD_\a(G) .
$$

Finally, assume that the equality in the bound is attained.
The previous argument gives that
$d_u^{2\a}+d_v^{2\a}= 2\d^{2\a}$
for every $uv \in E(G)$, and so,
$d_u=d_v=\d$
for every $uv \in E(G)$.
Hence, $G$ is a regular graph.
\end{proof}

\smallskip

Next, we obtain additional bounds of $SDD_{\a}$ which do not involve other topological indices.

\begin{theorem} \label{t:alfa}
Let $G$ be a graph with $m$ edges, minimum degree $\d$ and maximum degree $\d+1$, $\a>0$
and $A$ the cardinality of the set of edges $uv \in E(G)$ with $d_u \ne d_v$.
Then $A$ is an even integer and
$$
SDD_{\a}(G)
= 2m + A \left(  \frac{(\d+1)^{\a}}{\d^{\a}} + \frac{\d^{\a}}{(\d+1)^{\a}} - 2 \right)  .
$$
\end{theorem}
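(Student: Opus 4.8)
The plan is to exploit the rigidity of the degree sequence. Since $\d$ is the minimum degree and $\d+1$ the maximum degree, every vertex of $G$ has degree exactly $\d$ or exactly $\d+1$. First I would partition $V(G)$ into the sets $V_{\d}$ and $V_{\d+1}$ of vertices of degree $\d$ and $\d+1$, respectively, and split $E(G)$ accordingly into edges whose endpoints have equal degree and edges whose endpoints have distinct degree. By the definition of $A$ there are exactly $A$ edges of the second kind and $m-A$ of the first.

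The identity then follows by evaluating the summand $d_u^{\a}/d_v^{\a}+d_v^{\a}/d_u^{\a}$ on each class. On an equal-degree edge we have $d_u=d_v$, so the summand equals $2$; on a mixed edge $\{d_u,d_v\}=\{\d,\d+1\}$, so the summand equals $(\d+1)^{\a}/\d^{\a}+\d^{\a}/(\d+1)^{\a}$. Summing $2$ over the $m-A$ equal-degree edges and $(\d+1)^{\a}/\d^{\a}+\d^{\a}/(\d+1)^{\a}$ over the $A$ mixed edges, and then regrouping the term $2A$, yields the stated formula. This computation is routine once the partition is in place.

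The only part requiring a genuine argument is the parity of $A$, which is precisely the number of edges joining $V_{\d}$ and $V_{\d+1}$. I would obtain it by a degree count on one side. Summing the degrees of the vertices of $V_{\d+1}$ counts each edge lying inside $V_{\d+1}$ twice and each mixed edge once, so $(\d+1)\,|V_{\d+1}| = 2e_{++} + A$, where $e_{++}$ denotes the number of edges with both endpoints in $V_{\d+1}$; hence $A \equiv (\d+1)\,|V_{\d+1}| \pmod 2$. The analogous count over $V_{\d}$ gives $A \equiv \d\,|V_{\d}| \pmod 2$.

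Since $\d$ and $\d+1$ have opposite parities, exactly one of the two coefficients $\d$, $\d+1$ is even, and the corresponding congruence forces $A$ to be even. I expect this parity step to be the main (indeed essentially the only) obstacle, the remainder being bookkeeping that follows directly from the two-valued degree sequence.
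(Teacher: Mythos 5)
Your proposal is correct and follows essentially the same route as the paper: the same partition of $E(G)$ into equal-degree and mixed edges yields the formula, and the parity of $A$ comes from the same two degree counts (handshaking on the subgraphs induced by the degree-$\d$ and degree-$(\d+1)$ vertices). The only cosmetic difference is that you phrase the parity step directly via congruences, whereas the paper argues by contradiction.
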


\begin{proof}
Let $F=\left\lbrace uv\in E(G):\; d_u \ne d_v \right\rbrace $, then $A$ is the cardinality of the set $F$.
Since the minimum degree of $G$ is $\d$ and its maximum degree is $\d+1$,
if $uv\in F$, then $d_u=\d$ and $d_v=\d+1$ or viceversa, and therefore
$$
\frac{d_u^{\a}}{d_v^{\a}} + \frac{d_v^{\a}}{d_u^{\a}}
=\frac{(\d+1)^{\a}}{\d^{\a}} + \frac{\d^{\a}}{(\d+1)^{\a}} \,.
$$	
If $uv\in F^c=E(G) \setminus F$, then $d_u=d_v=\d$ or $d_u=d_v=\d+1$, and therefore
$$
\frac{d_u^{\a}}{d_v^{\a}} + \frac{d_v^{\a}}{d_u^{\a}}=2 .
$$
Since there are exactly $A$ edges in $F$ and $m-A$ edges in $F^c$, we have
$$
\begin{aligned}
SDD_{\a}(G)&=\!\!\!\!\!\!\sum_{uv \in E(G)} \!\! \Big( \,\frac{d_u^{\a}}{d_v^{\a}} + \frac{d_v^{\a}}{d_u^{\a}} \Big)\\
&=\!\!\!\!\sum_{uv \in F^c} \! \Big( \,\frac{d_u^{\a}}{d_v^{\a}} + \frac{d_v^{\a}}{d_u^{\a}} \Big)+\!\!\sum_{uv \in F} \! \Big( \,\frac{d_u^{\a}}{d_v^{\a}} + \frac{d_v^{\a}}{d_u^{\a}} \Big)\\
&=\!\!\!\!\sum_{uv \in F^c} \!2 +\!\!\sum_{uv \in F} \! \Big( \, \frac{(\d+1)^{\a}}{\d^{\a}} + \frac{\d^{\a}}{(\d+1)^{\a}} \Big) \\
&=2m-2A + A\Big( \, \frac{(\d+1)^{\a}}{\d^{\a}} + \frac{\d^{\a}}{(\d+1)^{\a}} \Big) .
\end{aligned}
$$
This gives the equality.

Seeking for a contradiction assume that $A$ is an odd integer.

Let $\G_1$ be the subgraph of $G$ induced by the $n_1$ vertices with degree $\d$ in $V(G)$,
and denote by $m_1$ the cardinality of the set of edges of $\G_1$.
Handshaking Lemma gives $n_1\d - A = 2m_1$.
Since $A$ is an odd integer, $\d$ is also an odd integer.
Thus, $\d+1$ is an even integer.

Let $\G_2$ be the subgraph of $G$ induced by the $n_2$ vertices with degree $\d+1$ in $V(G)$,
and denote by $m_2$ the cardinality of the set of edges of $\G_2$.
Handshaking Lemma gives $n_2(\d+1) - A = 2m_2$, a contradiction,
since $A$ is an odd integer and $\d+1$ is an even integer.

Thus, we conclude that $A$ is an even integer.
\end{proof}

We will need the following result in the proof of Theorem \ref{t:dD} below.

\begin{lemma} \label{l:u}
Given $\a>0$, consider the function $u: (0,\infty) \rightarrow (0,\infty)$ defined as
$u(t)=t^{\a}+t^{-\a}$.
Then $u$ is strictly decreasing on $(0,1]$, $u$ is strictly increasing on $[1,\infty)$
and $u(t) \ge u(1)=2$.
\end{lemma}

\begin{proof}
We have
$$
u'(t)
=\a t^{\a-1}-\a t^{-\a-1}
=\a t^{-\a-1}( t^{2\a}-1).
$$
Since $\a>0$, we have $u'<0$ on $(0,1)$ and $u'>0$ on $[1,\infty)$.
This gives the result.
\end{proof}

Theorem \ref{t:alfa} gives the precise value of $SDD_{\a}$ when $\D=\d+1$.
Theorem \ref{t:dD} below provides a lower bound when $\D>\d+1$.

\begin{theorem} \label{t:dD}
	Let $G$ be a graph with $m$ edges, minimum degree $\d$ and maximum degree $\D>\d+1$.
	Denote by $A_0,A_1,A_2,$
	the cardinality of the subsets of edges
	$F_0 = \{ uv \in E(G) : \, d_{u} = \d, d_{v} = \D \}$,
	$F_1 = \{ uv \in E(G) : \, d_{u} = \d, \d < d_{v} < \D \}$,
	$F_2 = \{ uv \in E(G) : \, d_{u} = \D, \d < d_{v} < \D \}$,
	respectively.
	If $\a>0$, then
	$$
\begin{aligned}
	SDD_{\a}(G)
	& \le
	(m-A_1-A_2) \left(\,\frac{\D^{\a}}{\d^{\a}} + \frac{\d^{\a}}{\D^{\a}}\right)
+A_1 \! \left( \frac{(\D-1)^{\a}}{\d^{\a}} + \frac{\d^{\a}}{(\D-1)^{\a}}\right)
\\
& \quad +A_2 \!\left( \frac{\D^{\a}}{(\d+1)^{\a}} + \frac{(\d+1)^{\a}}{\D^{\a}}\right) ,
	\\
	SDD_{\a}(G) & \ge
	2m  + A_0 \! \left( \frac{\D^{\a}}{\d^{\a}} + \frac{\d^{\a}}{\D^{\a}}-2\right)
+ A_1 \! \left( \frac{(\d+1)^{\a}}{\d^{\a}} + \frac{\d^{\a}}{(\d+1)^{\a}}-2\right)
\\
& \quad + A_2 \! \left(  \frac{\D^{\a}}{(\D-1)^{\a}} + \frac{(\D-1)^{\a}}{\D^{\a}}-2\right) .
\end{aligned}
	$$
\end{theorem}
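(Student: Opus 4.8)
The plan is to read each edge's contribution as a value of the function $u(t)=t^{\a}+t^{-\a}$ from Lemma \ref{l:u}. Indeed, the summand for an edge $uv$ equals $d_u^{\a}/d_v^{\a}+d_v^{\a}/d_u^{\a}=u(d_u/d_v)$, and since $u(t)=u(1/t)$ I may always write it as $u(r)$ with $r\ge 1$ the larger endpoint degree divided by the smaller. By Lemma \ref{l:u}, $u$ is strictly increasing on $[1,\infty)$ with $u(t)\ge u(1)=2$, so bounding an edge's contribution amounts to controlling how extreme its degree ratio can be.

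First I would partition $E(G)$ according to the endpoint degrees, which all lie in $\{\d,\d+1,\dots,\D\}$: the sets $F_0,F_1,F_2$, together with the edges whose endpoints are both $\d$, both $\D$, or both strictly between $\d$ and $\D$. For any edge the ratio $r$ satisfies $1\le r\le \D/\d$, hence $2\le u(r)\le u(\D/\d)$. For the upper bound I would estimate every edge outside $F_1\cup F_2$---there are $m-A_1-A_2$ of them---by the global maximum $u(\D/\d)=\D^{\a}/\d^{\a}+\d^{\a}/\D^{\a}$, a bound which is exact on $F_0$. An edge in $F_1$ has one endpoint of degree $\d$ and the other an integer in $(\d,\D)$, hence at most $\D-1$; monotonicity of $u$ gives the bound $u((\D-1)/\d)$. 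Symmetrically an $F_2$ edge has one endpoint $\D$ and the other at least $\d+1$, giving $u(\D/(\d+1))$. Summing the three estimates yields the first inequality.

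For the lower bound I would start from the baseline $2m$ obtained from $u(r)\ge 2$ on every edge, and then add back the exact or minimal surplus on $F_0,F_1,F_2$. Each edge of $F_0$ contributes exactly $u(\D/\d)$, hence surplus $u(\D/\d)-2$. An $F_1$ edge has its intermediate endpoint an integer at least $\d+1$, so $r\ge(\d+1)/\d$ and its contribution is at least $u((\d+1)/\d)$, a surplus of at least $u((\d+1)/\d)-2$; an $F_2$ edge has its intermediate endpoint at most $\D-1$, so the least extreme ratio is $\D/(\D-1)$, giving surplus at least $u(\D/(\D-1))-2$. The remaining edges contribute a nonnegative surplus, which I simply discard. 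Collecting terms produces the second inequality.

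The point requiring care---and the only real obstacle---is the interplay between integrality of the degrees and the direction in which Lemma \ref{l:u} is applied. The replacements $\D-1$ and $\d+1$ for an intermediate endpoint are available precisely because a degree strictly between $\d$ and $\D$ is an integer in $\{\d+1,\dots,\D-1\}$; and for each of the four inequalities one must select the correct end of the ratio interval (largest ratio for $F_1$ in the upper bound but smallest for $F_1$ in the lower bound, and the reverse for $F_2$). Keeping these four cases aligned with the monotonicity of $u$ is the entire bookkeeping difficulty---once the partition and the four extremal ratios are fixed, every term follows immediately.
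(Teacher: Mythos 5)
Your proposal is correct and follows essentially the same route as the paper: the same partition of $E(G)$ into $F_0$, $F_1$, $F_2$ and the remainder, the same use of Lemma \ref{l:u} (monotonicity of $u(t)=t^{\a}+t^{-\a}$) to pin down the extremal ratios $(\d+1)/\d$, $(\D-1)/\d$, $\D/(\d+1)$, $\D/(\D-1)$, and $\D/\d$, and the same bookkeeping to assemble the two bounds. No gaps.
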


\begin{proof}
	Lemma \ref{l:u} gives that the function
	$$
	\frac{d_{v}^{\a}}{\d^{\a}} + \frac{\d^{\a}}{d_{v}^{\a}}
	= u \left(  \frac{d_{v}}{\d} \right)
	$$
	is increasing in $d_{v} \in [\d+1,\D-1]$ and so,
	$$
	\frac{(\d+1)^{\a}}{\d^{\a}} + \frac{\d^{\a}}{(\d+1)^{\a}}
	\le
	\frac{d_{v}^{\a}}{\d^{\a}} + \frac{\d^{\a}}{d_{v}^{\a}}
	\le
	\frac{(\D-1)^{\a}}{\d^{\a}} + \frac{\d^{\a}}{(\D-1)^{\a}}
	\,,
	$$
	for every $uv \in F_1$.
	
	In a similar way, Lemma \ref{l:u} gives that the function
	$$
	\frac{\D^{\a}}{d_{v}^{\a}} + \frac{d_{v}^{\a}}{\D^{\a}}
	= u \left(  \frac{d_{v}}{\D} \right)
	$$
	is decreasing in $d_{v} \in [\d+1,\D-1]$ and so,
	$$
	\frac{\D^{\a}}{(\D-1)^{\a}} + \frac{(\D-1)^{\a}}{\D^{\a}}
	\le
	\frac{\D^{\a}}{d_{v}^{\a}} + \frac{d_{v}^{\a}}{\D^{\a}}
	\le
	\frac{\D^{\a}}{(\d+1)^{\a}} + \frac{(\d+1)^{\a}}{\D^{\a}}
	\,,
	$$
	for every $uv \in F_2$.
	
	Also,
	$$
	2
\le \frac{d_{u}^{\a}}{d_{v}^{\a}} + \frac{d_{v}^{\a}}{d_{u}^{\a}}
\le \frac{\D^{\a}}{\d^{\a}} + \frac{\d^{\a}}{\D^{\a}}
	$$
	for every $uv \in E(G)$.

We have
	$$
	\begin{aligned}
	SDD_{\a}(G)
	& = \sum_{uv\in E(G)\setminus (F_0 \cup F_1 \cup F_2)} \left(\, \frac{d_{u}^{\a}}{d_{v}^{\a}} + \frac{d_{v}^{\a}}{d_{u}^{\a}} \,\right)
	+ \sum_{uv\in F_0} \left(\, \frac{d_{u}^{\a}}{d_{v}^{\a}} + \frac{d_{v}^{\a}}{d_{u}^{\a}} \,\right)
	\\
& \quad + \sum_{uv\in F_1} \left(\, \frac{d_{u}^{\a}}{d_{v}^{\a}} + \frac{d_{v}^{\a}}{d_{u}^{\a}} \,\right)
	+ \sum_{uv\in F_2} \left(\, \frac{d_{u}^{\a}}{d_{v}^{\a}} + \frac{d_{v}^{\a}}{d_{u}^{\a}} \,\right)
	\\
	& \ge \sum_{uv\in E(G)\setminus (F_0 \cup F_1 \cup F_2)} \!\!\!\!\!\!\!\!\!\!\!\! 2 \,\,\,\,\,\,
	+ \sum_{uv\in F_0} \left(\, \frac{\D^{\a}}{\d^{\a}} + \frac{\d^{\a}}{\D^{\a}} \,\right)
	\\
& \quad + \sum_{uv\in F_1} \left(\, \frac{d_v^{\a}}{\d^{\a}} + \frac{\d^{\a}}{d_v^{\a}} \,\right)
	+ \sum_{uv\in F_2} \left(\, \frac{\D^{\a}}{d_v^{\a}} + \frac{d_v^{\a}}{\D^{\a}} \,\right) .
	\end{aligned}
	$$
Hence,
	$$
\begin{aligned}
	SDD_{\a}(G)
& \ge
	2m - 2A_0 - 2A_1 - 2A_2 + A_0 \left(\, \frac{\D^{\a}}{\d^{\a}} + \frac{\d^{\a}}{\D^{\a}} \,\right)
\\
& \quad + A_1 \left(\, \frac{(\d+1)^{\a}}{\d^{\a}} + \frac{\d^{\a}}{(\d+1)^{\a}} \,\right)
+ A_2 \left(\, \frac{\D^{\a}}{(\D-1)^{\a}} + \frac{(\D-1)^{\a}}{\D^{\a}} \,\right)  .
\end{aligned}
$$

We also have
$$
\begin{aligned}
	SDD_{\a}(G)
	& = \sum_{uv\in E(G)\setminus (F_1 \cup F_2)} \left(\, \frac{d_{u}^{\a}}{d_{v}^{\a}} + \frac{d_{v}^{\a}}{d_{u}^{\a}} \,\right)
	\\
& \quad + \sum_{uv\in F_1} \left(\, \frac{d_{u}^{\a}}{d_{v}^{\a}} + \frac{d_{v}^{\a}}{d_{u}^{\a}} \,\right)
	+ \sum_{uv\in F_2} \left(\, \frac{d_{u}^{\a}}{d_{v}^{\a}} + \frac{d_{v}^{\a}}{d_{u}^{\a}} \,\right)
	\\
	& \le
	(m - A_1 - A_2)\left(\, \frac{\D^{\a}}{\d^{\a}} + \frac{\d^{\a}}{\D^{\a}} \,\right)
\\
& \quad + A_1 \left(\, \frac{(\D-1)^{\a}}{\d^{\a}} + \frac{\d^{\a}}{(\D-1)^{\a}} \,\right)
+ A_2 \left(\, \frac{\D^{\a}}{(\d+1)^{\a}} + \frac{(\d+1)^{\a}}{\D^{\a}} \,\right) .
\end{aligned}
$$
\end{proof}

\section{Computational study of the variable symmetric division deg index on random graphs}
\label{statistics}

Here we consider two models of random graphs $G$: Erd\"os-R\'enyi (ER) graphs
$G(n,p)$ and bipartite random (BR) graphs $G(n_1,n_2,p)$. 
ER graphs are formed by $n$ vertices connected independently with probability $p \in [0,1]$. 
While BR graphs are composed by two disjoint sets, set 1 and set 2, with $n_1$ and $n_2$ vertices each such 
that there are no adjacent vertices within the same set, being $n=n_1+n_2$ the total number 
of vertices in the bipartite graph. The vertices of the two sets are connected randomly with
probability $p \in [0,1]$.

We stress that the computational study of the variable symmetric division deg index we perform 
below is justified by the random nature of the graph models we want to explore. Since a given 
parameter set $(n,p)$ [$(n_1,n_2,p)$] represents an infinite-size ensemble of ER graphs 
[BR graphs], the computation of $SDD_\alpha(G)$ on a single graph is irrelevant. In contrast, 
the computation of $\left<  SDD_\alpha(G) \right>$ (where $\left<  \cdot \right>$ indicates 
ensemble average) over a large number of random graphs, all characterized by the same 
parameter set $(n,p)$ [$(n_1,n_2,p)$], may provide useful {\it average} 
information about the full ensemble. This {\it computational} approach, well known in random matrix 
theory studies, is not widespread in studies involving topological indices, mainly because 
topological indices are not commonly applied to random graphs; for very recent exceptions 
see~\cite{MMRS20,MMRS21,AMRS21,AHMG20}.

\subsection{Average properties of the $SDD_\alpha$ index on Erd\"os-R\'enyi random graphs}
\label{ER}

From the definition of the variable symmetric division deg index, see Eq.~(\ref{SDD}), we have that:
\begin{itemize}
\item[{\bf (i)}]
For $\alpha=0$, $\left< SDD_0(G)\right>$ gives twice the average number of edges of the ER
graph. That is,
\begin{eqnarray}
\label{SDD0}
\left<  SDD_0(G) \right> & = & \left<\sum_{uv\in E(G)} \left(\frac{d_u^0}{d_v^0} + \frac{d_v^0}{d_u^0} \right)\right>
= \left< \sum_{uv\in E(G)} (1+1) \right> \nonumber \\
& = & \left< 2 |E(G)| \right> = n(n-1) p \, .
\end{eqnarray}
\item[{\bf (ii)}]
When $np\gg 1$, we can approximate $d_u \approx d_v \approx \left< d \right>$, then 
\begin{eqnarray}
\left<  SDD_\alpha(G) \right> & \approx &
\left<\sum_{uv\in E(G)} \left( 1^\alpha + 1^\alpha \right)\right> = \left< \sum_{uv\in E(G)} 2 \right> \nonumber \\
& = & \left< 2 |E(G)| \right> = n(n-1) p \, .
\label{avSDD}
\end{eqnarray}
\item[{\bf (iii)}]
By recognizing that the average degree of the ER graph model reads as
\begin{equation}
\label{k}
\left< d \right> = (n-1)p \, ,
\end{equation}
we can rewrite Eq.~(\ref{avSDD}) as
\begin{equation}
\frac{\left<  SDD_\alpha(G) \right>}{n} \approx \left< d \right> \, .
\label{avSDD2}
\end{equation}
We stress that Eq.~(\ref{avSDD2}) is expected to be valid for $np \gg 1$.
\end{itemize}

In Fig.~\ref{Fig1}(a) we plot $\left< SDD_\alpha(G) \right>$ as a function of the probability
$p$ of ER graphs of size $n=500$. All averages in Fig.~\ref{Fig1} are computed over
ensembles of $10^7/n$ random graphs. In Fig.~\ref{Fig1}(a) we show curves for $\alpha\in[0,4]$. 
The dashed-magenta curve corresponds to the case $\alpha=0$, which coincides with Eq.~(\ref{SDD0}).
Moreover, we observe that
$$\left<  SDD_{\alpha\le 0.5}(G) \right> \approx \left<  SDD_0(G) \right> = n(n-1) p \, .$$
However, once $\alpha> 0.5$, the curves $\left< SDD_\alpha(G) \right>$ versus $p$ deviate
from Eq.~(\ref{SDD0}), at intermediate values of $p$, in the form of a bump which is enhanced the
larger the value of $\alpha$ is.
Also, in Fig.~\ref{Fig1}(a) we can clearly see that Eq.~(\ref{avSDD}) is satisfied when $np\gg 1$,
as expected.

\begin{figure}[t!]
\begin{center}
\includegraphics[width=0.92\textwidth]{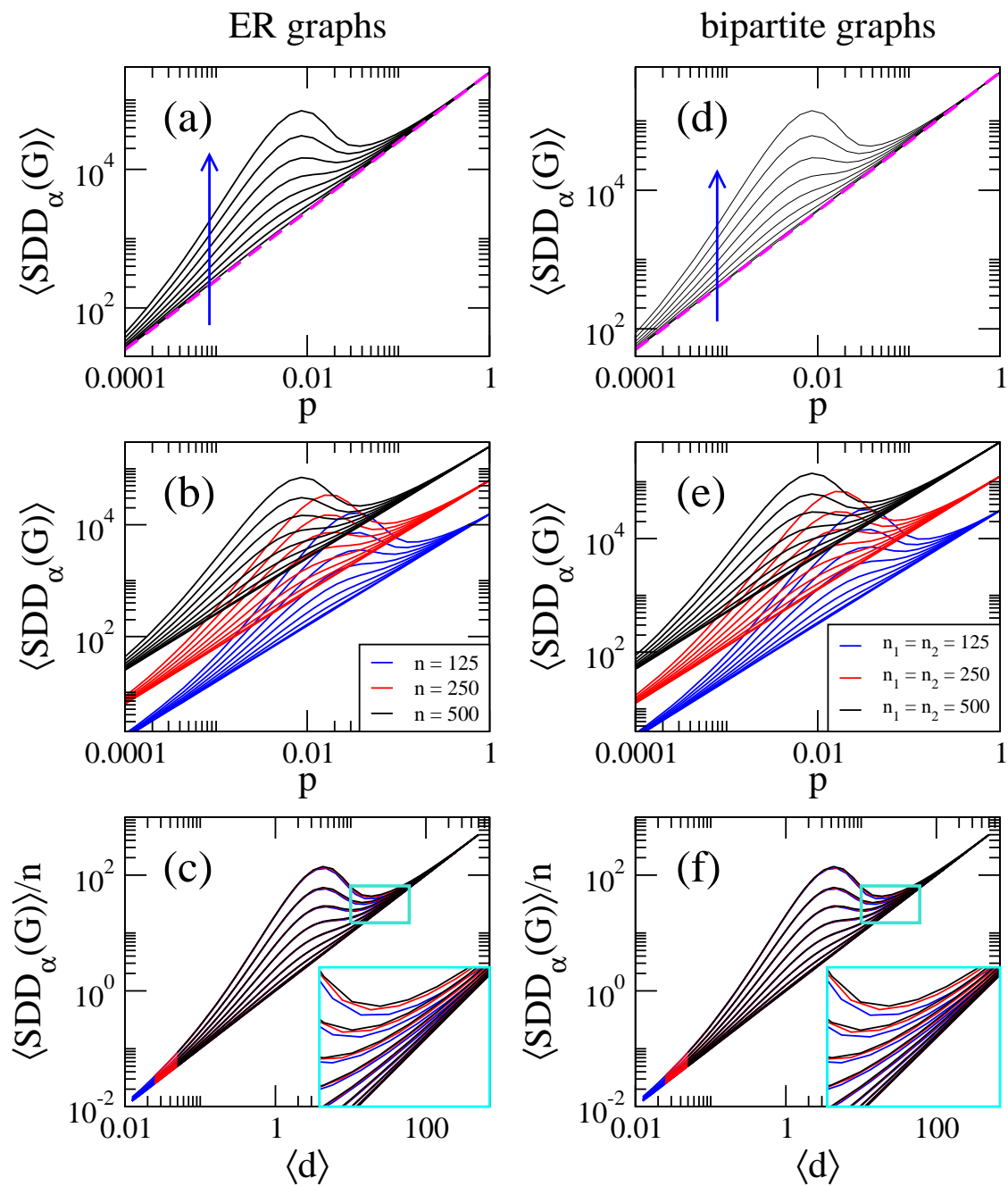}
\caption{
(a) Average variable symmetric division deg index $\left< SDD_\alpha(G) \right>$
as a function of the probability $p$ of Erd\"os-R\'enyi graphs of size $n=500$.
Here we show curves for $\alpha\in[0,4]$ in steps of $0.5$ (the arrow indicates increasing $\alpha$).
The dashed-magenta curve corresponds to the case $\alpha=0$.
(b) $\left< SDD_\alpha(G) \right>$ as a function of the probability $p$ of
ER graphs of three different sizes: $n=125$, 250 and 500.
(c) $\left< SDD_\alpha(G) \right>/n$ as a function of the average degree
$\left< d \right>$; same curves as in panel (b).
The inset in (c) is the enlargement of the cyan rectangle.
(d-f) Equivalent figures to (a-c), respectively, but for bipartite random graphs composed by sets of
equal sizes: in (d) $n_1=n_2=500$ while in (e,f) $n_1=n_2=\{125,250,500\}$.
}
\label{Fig1}
\end{center}
\end{figure}

Now, in Fig.~\ref{Fig1}(b) we present $\left< SDD_\alpha(G) \right>$ as a function of the probability
$p$ of ER graphs of three different sizes. It is clear from this figure that the blocks of curves,
characterized by the different graph sizes (and shown in different colors), display similar curves but
displaced on both axes. Moreover, the fact that these blocks of curves, plotted in semi-log
scale, are shifted the same amount on both $x-$ and $y-$axis when doubling $n$ make us anticipate
the scaling of $\left< SDD_\alpha(G) \right>$.
We stress that other average variable degree-based indices on ER random graphs (normalized to the 
graph size)
have been shown to scale with the average degree~\cite{AHMG20}. Indeed, this statement is encoded
in Eq.~(\ref{avSDD2}), that we derived for $np\gg 1$ but should serve as the global scaling
equation for $\left< SDD_\alpha(G) \right>$.

Therefore, in Fig.~\ref{Fig1}(c) we show $\left< SDD_\alpha(G) \right>/n$ as a function of the average
degree $\left< d \right>$ where the same curves of Fig.~\ref{Fig1}(b) have been used. There we verify
the global scaling of $\left< SDD_\alpha(G) \right>$, as anticipated in Eq.~(\ref{avSDD2}), by
noticing that the blocks of curves (painted in different colors) for different graph sizes fall on top of each
other.

Also, form Figs.~\ref{Fig1}(a-c) we observe that the inequality of Theorem~\ref{t:1} is 
extended to the average variable symmetric division deg index on random graphs:
\begin{equation}
\label{ineqt:1}
\left< SDD_{\a}(G) \right> \le \left< SDD_{\b}(G) \right> \, , \qquad 0 < \a < \b \, ;
\end{equation}
see e.g.~the blue arrow in Fig.~\ref{Fig1}(a) which indicates increasing $\alpha$. 
Here, the equality is attained if and only if $p=1$. However, we have observed that
$\left< SDD_{\a}(G) \right> \approx \left< SDD_{\b}(G) \right>$ already for $\left< d \right>\ge 10$.

\subsection{Average properties of the $SDD_\alpha$ index on bipartite random graphs}
\label{RR}

In Figs.~\ref{Fig1}(d,e) we present curves of the $\left< SDD_\alpha(G) \right>$ as a function of the
probability $p$ of BR graphs. For simplicity we show results for BR graphs composed by sets of equal 
sizes $n_1=n_2$. In Fig.~\ref{Fig1}(d) we consider the case of $n_1=n_2=500$ while in (e) we 
report $n_1=n_2=\{125,250,500\}$. In both figures we show curves for $\alpha\in[0,4]$ in steps of $0.5$.

Since edges in a bipartite graph join vertices of different sets, and we are labeling 
here the sets as set 1 and set 2, we replace $d_u$ by $d_1$ and $d_v$ by $d_2$ in the expression 
for the $SDD_\alpha(G)$ index below. Thus,
\begin{itemize}
\item[{\bf (i)}]
For $\alpha=0$, $\left< SDD_0(G)\right>$ gives twice the average number of edges of the BG
graph. That is,
\begin{eqnarray}
\label{SDD0BG}
\left<  SDD_0(G) \right> & = & \left<\sum_{E(G)} \left(\frac{d_1^0}{d_2^0} + \frac{d_2^0}{d_1^0} \right)\right>
= \left< \sum_{E(G)} (1+1) \right> \nonumber \\
& = & \left< 2 |E(G)| \right> = 2n_1n_2 p \, .
\end{eqnarray}
\item[{\bf (ii)}]
When both $n_1p\gg 1$ and $n_2p\gg 1$, we can approximate $d_1 \approx \left< d_1 \right>$ and 
$d_2 \approx \left< d_2 \right>$, then 
\begin{equation}
\left<  SDD_\alpha(G) \right> \approx
\left<\sum_{E(G)} \left(\frac{\left< d_1 \right>^\alpha}{\left< d_2 \right>^\alpha} + \frac{\left< d_2 \right>^\alpha}{\left< d_1 \right>^\alpha} \right)\right>
= \left<|E(G)| \left(\frac{\left< d_1 \right>^\alpha}{\left< d_2 \right>^\alpha} + \frac{\left< d_2 \right>^\alpha}{\left< d_1 \right>^\alpha} \right)\right> .
\label{avSDDBG}
\end{equation}
\item[{\bf (iii)}]
In the case we consider in Figs.~\ref{Fig1}(d-f), where $n_1=n_2=n/2$, so that 
$\left< d_1 \right>=\left< d_2 \right>=\left< d \right>$, Eq.~(\ref{avSDDBG}) reduces to
\begin{equation}
\left<  SDD_\alpha(G) \right> \approx \left< 2 |E(G)| \right> = 2n_1n_2 p = \frac{n^2}{2} p \, .
\label{avSDDBG2}
\end{equation}
\item[{\bf (iv)}]
By recognizing that $\left< d \right>=np/2$ we can rewrite Eq.~(\ref{avSDDBG2}) as
\begin{equation}
\frac{\left<  SDD_\alpha(G) \right>}{n} \approx \left< d \right> \, .
\label{avSDD2BG}
\end{equation}
We stress that Eq.~(\ref{avSDD2BG}) is expected to be valid for $np \gg 1$.
We also note that Eq.~(\ref{avSDD2BG}) has exactly the same form as Eq.~(\ref{avSDD2}).
\end{itemize}

From Figs.~\ref{Fig1}(d,e) we note that 
$$
\left<  SDD_{\alpha\le 0.5}(G) \right> \approx \left<  SDD_0(G) \right> = 2n_1n_2 p \, ,
$$
see the dashed-magenta curve in Fig.~\ref{Fig1}(d).
But once $\alpha> 0.5$, the curves $\left< SDD_\alpha(G) \right>$ versus $p$ deviate
from Eq.~(\ref{SDD0BG}), at intermediate values of $p$, in the form of bumps which are
enhanced the larger the value of $\alpha$ is. These bumps make clear the validity of
inequality (\ref{ineqt:1}) on BR graphs; see e.g.~the blue arrow in Fig.~\ref{Fig1}(d) 
which indicates increasing $\alpha$.

Finally, following the scaling analysis made in the previous subsection for ER graphs, in 
Fig.~\ref{Fig1}(f) we plot the $\left< SDD_\alpha(G) \right>/n$ as a function of the average
degree $\left< d \right>$ where the same data sets of Fig.~\ref{Fig1}(e) have been used. 
Thus we verify that $\left< SDD_\alpha(G) \right>$/n scales with $\left< d \right>$, as anticipated in 
Eq.~(\ref{avSDD2BG}); that is, the blocks of curves (painted in different colors) for different graph 
sizes coincide.

\section{Conclusions}

In this work we performed analytical and computational studies of the variable symmetric division deg index 
$SDD_\alpha(G)$.
First, we provided a monotonicity property and obtained new inequalities connecting $SDD_\alpha(G)$ with 
other well--known topological indices such as the first and second variable Zagreb indices, the
variable inverse sum deg index, as well as the the modified Narumi-Katayama index.
Then, we apply the index $SDD_\alpha(G)$ on two ensembles of random graphs: Erd\H{o}s-R\'enyi graphs and
bipartite random graphs. Thus, we computationally showed, for both random graph models, that the ratio 
$\bra SDD_\alpha(G) \ket/n$ is a function of the average degree $\bra d \ket$ only ($n$ being the order of 
the graph). We note that this last result, also observed for other variable topological indices~\cite{AHMG20}, 
is valid for random bipartite graphs only when they are formed by sets of the same size.

\section*{\bf ACKNOWLEDGEMENTS}
The research of J.M.R. and J.M.S. was supported by a grant from Agencia Estatal de Investigaci\'on (PID2019-106433GBI00/AEI/10.13039/501100011033), Spain.
J.M.R. was supported by the Madrid Government (Comunidad de Madrid-Spain) under the Multiannual Agreement with UC3M in the line of Excellence of University Professors (EPUC3M23), and in the context of the V PRICIT (Regional Programme of Research and Technological Innovation).



\begin{thebibliography}{9}

\bibitem{GT} 
I. Gutman, N. Trinajsti\'c, 
Graph theory and molecular orbitals. Total $\pi$-electron energy of alternant hydrocarbons, 
{\it Chem. Phys. Lett.} {\bf 17} (1972) 535--538.

\bibitem{Gutman} 
I. Gutman, 
Degree--based topological indices, 
{\it Croat. Chem. Acta} {\bf 86} (2013) 351--361.

\bibitem{GD} 
I. Gutman, K. C. Das, 
The first Zagreb index 30 years after, 
{\it MATCH Commun. Math. Comput. Chem.} {\bf 50} (2004) 83--92.

\bibitem{GR} 
I. Gutman, T. R\'eti, 
Zagreb group indices and beyond, 
{\it Int. J. Chem. Model.} {\bf 6} (2014) 191--200.

\bibitem{R2} 
M. Randi\'c, 
Novel graph theoretical approach to heteroatoms in QSAR,
{\it Chemometrics Intel. Lab. Syst.} {\bf 10} (1991) 213--227.

\bibitem{R3} 
M. Randi\'c,  
On computation of optimal parameters for multivariate analysis of structure-property relationship,
{\it J. Chem. Inf. Comput. Sci.} {\bf 31} (1991) 970--980.

\bibitem{RPL} 
M. Randi\'c, D. Plav\v{s}i\'c, N. Ler\v{s}, 
Variable connectivity index for cycle-containing structures,
{\it J. Chem. Inf. Comput. Sci.} {\bf 41} (2001) 657--662.

\bibitem{MN} 
A. Mili\v{c}evi\'c, S. Nikoli\'c, 
On variable Zagreb indices, 
{\it Croat. Chem. Acta} {\bf 77} (2004) 97--101.

\bibitem{LZheng} 
X. Li, J. Zheng, 
A unified approach to the extremal trees for different indices,
{\it MATCH Commun. Math. Comput. Chem.} {\bf 54} (2005) 195--208.

\bibitem{LZhao} 
X. Li, H. Zhao, 
Trees with the first smallest and largest generalized topological indices,
{\it MATCH Commun. Math. Comput. Chem.} {\bf 50} (2004) 57--62.

\bibitem{Vuki5}
D. Vuki\v{c}evi\'c, 
Bond additive modeling 5. Mathematical properties of the variable sum exdeg index, 
{\it Croat. Chem. Acta} {\bf 84} (2011) 93--101.

\bibitem{VG} 
D. Vuki\v{c}evi\'c, M. Ga\v{s}perov, 
Bond additive modeling 1. Adriatic indices, 
{\it Croat. Chem. Acta} {\bf 83} (2010) 243--260.

\bibitem{V2} 
D. Vuki\v{c}evi\'c, 
Bond additive modeling 2. Mathematical properties of max-min rodeg index,
{\it Croat. Chem. Acta\/} {\bf 83} (2010) 261--273.

\bibitem{GSG} 
M. Ghorbani, M. Songhori, I. Gutman, 
Modified Narumi-Katayama index,
{\it  Kragujevac J. Sci.} {\bf 34} (2012) 57--64.

\bibitem{NK} 
H. Narumi, M. Katayama, 
Simple topological index. A newly devised index characterizing 
the topological nature of structural isomers of saturated hydrocarbons,
{\it Mem. Fac. Engin. Hokkaido Univ.} {\bf 16} (1984) 209--214.

\bibitem{MMRS20}
C. T. Mart\'inez-Mart\'inez, J. A. Mendez-Bermudez, J. M. Rodr\'iguez, J. M. Sigarreta Almira,
Computational and analytical studies of the Randic index in Erd\"os-R\'{e}nyi models,
{\it Appl. Math. Comput.} {\bf 377} (2020) 125137.

\bibitem{MMRS21}
C. T. Mart\'inez-Mart\'inez, J. A. Mendez-Bermudez, J. M. Rodr\'iguez, J. M.  Sigarreta,
Computational and analytical studies of the harmonic index in Erd\"os--R\'enyi models,
{\it MATCH Commun. Math. Comput. Chem.} {\bf 85} (2021) 395.

\bibitem{AMRS21}
R. Aguilar-Sanchez, J. A. Mendez-Bermudez, J. M. Rodriguez, J. M. Sigarreta-Almira.
Analytical and statistical studies of Rodriguez-Velazquez indices.
{\it J. Math. Chem} {\bf 59} (2021) 1246--1259. 
      
\bibitem{AHMG20}
R. Aguilar-Sanchez, I. F. Herrera-Gonzalez, J. A. Mendez-Bermudez, and J. M. Sigarreta.
Computational properties of general indices on random networks.
{\it Symmetry} {\bf 12} (2020) 1341.





\end{thebibliography}
\end{document}